\newtheorem{thm}{Theorem}[section]
\newcommand{\R}{{\mathbb{R}}}
\newcommand{\1}{\partial}
\begin{document}
\title{A note on compact gradient Yamabe solitons} 
\author{Shu-Yu Hsu\\
Department of Mathematics\\
National Chung Cheng University\\
168 University Road, Min-Hsiung\\
Chia-Yi 621, Taiwan, R.O.C.\\
e-mail: syhsu@math.ccu.edu.tw}
\date{July 19, 2011}
\smallbreak \maketitle
\begin{abstract}
We will give a simple proof that the metric of any compact Yamabe 
gradient soliton $(M,g)$ is a metric of constant scalar curvature
when the dimension of the manifold $n\ge 3$.
\end{abstract}

\vskip 0.2truein

Key words: compact gradient Yamabe soliton, Yamabe flow,
constant scalar curvature metric

AMS Mathematics Subject Classification: Primary 58J05 Secondary 58J35,
58C44
\vskip 0.2truein
\setcounter{equation}{0}
\setcounter{section}{-1}

\setcounter{equation}{0}
\setcounter{thm}{0}

Recently there is a lot of study of the Yamabe flow on manifolds by 
S.~Brendle \cite{B1}, \cite{B2}, B.~Chow \cite{C}, P.~Daskalopoulos and 
N.~Sesum \cite{DS}, S.Y.~Hsu \cite{H}, A.~Burchard, R.J.~Mccan and A.~Smith
\cite{BMS}, L.~Ma and L.~Cheng \cite{MC}, M.~Del Pino, M.~S\'aez \cite{PS}
and others. A time dependent metric $g(\cdot,t)$ on a Riemannian manifold 
$M$ is said to evolve by the Yamabe flow if the metric $g$ satisfies
$$
\frac{\1 }{\1 t}g_{ij}=-Rg_{ij}
$$
on $M$ where $R$ is the scalar curvature. Yamabe gradient solitons are special 
solutions of Yamabe flow. We say that a metric $g_{ij}$ on a Riemannian 
manifold $M$ a Yamabe gradient soliton if there exists a smooth function 
$f:M\to\R$ and a constant $\rho\in\R$ such that
\begin{equation}\label{soliton-defn}
(R-\rho)g_{ij}=\nabla_i\nabla_jf\qquad\mbox{ on }M.
\end{equation}
It is proved in \cite{DS} the metric of any compact Yamabe gradient
soliton $(M,g)$ is a metric of constant scalar curvature. In this paper we will
give a simple alternate proof of this interesting result.

The main theorem of this paper is the following.

\begin{thm}
Let $(M,g)$ be a $n$-dimensional compact Yamabe gradient soliton with $n\ge 3$.
Then $(M,g)$ is a manifold
of constant scalar curvature.
\end{thm}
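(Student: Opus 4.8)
The plan is to differentiate the soliton equation \eqref{soliton-defn} twice and integrate the result over the compact manifold $M$: this should reduce everything to the single identity $\bigl(1-\tfrac{n}{2}\bigr)\int_M (R-\rho)^2\,dV=0$, which for $n\ge 3$ forces $R\equiv\rho$.

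First I would trace \eqref{soliton-defn} to obtain $\Delta f=n(R-\rho)$; integrating this over $M$ shows in passing that $\rho$ equals the mean value of $R$, so that $\int_M(R-\rho)\,dV=0$. Next I would take the divergence of \eqref{soliton-defn} in one index. Using the standard commutation identity $\nabla^i\nabla_i\nabla_j f=\nabla_j\Delta f+R_{jk}\nabla^k f$ on the left side, noting that the divergence of the right side is simply $\nabla_j R$, and substituting $\Delta f=n(R-\rho)$, I arrive at the key relation
\[
R_{jk}\nabla^k f=-(n-1)\,\nabla_j R .
\]

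Then I would take the divergence once more. By the contracted second Bianchi identity $\nabla^j R_{jk}=\tfrac12\nabla_k R$, the left side becomes $\tfrac12\langle\nabla R,\nabla f\rangle+R_{jk}\nabla^j\nabla^k f$, and here \eqref{soliton-defn} gives $R_{jk}\nabla^j\nabla^k f=(R-\rho)R$; the right side is $-(n-1)\Delta R$. Integrating the resulting identity over $M$ annihilates $\int_M\Delta R$, and inserting the two integration-by-parts consequences $\int_M\langle\nabla R,\nabla f\rangle\,dV=-\int_M R\,\Delta f\,dV=-n\int_M(R-\rho)^2\,dV$ (using $\Delta f=n(R-\rho)$ and $\int_M(R-\rho)\,dV=0$) and $\int_M R(R-\rho)\,dV=\int_M(R-\rho)^2\,dV$, one is left with $\bigl(1-\tfrac{n}{2}\bigr)\int_M(R-\rho)^2\,dV=0$. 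Since $n\ge 3$ the coefficient is strictly negative, hence $R\equiv\rho$ and $(M,g)$ has constant scalar curvature.

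Every step is a short integration by parts, so there is no analytic difficulty; the main thing to appreciate is why one differentiation is not enough. Combining the relation $R_{jk}\nabla^k f=-(n-1)\nabla_j R$ with the Bochner formula for $|\nabla f|^2$ only reproduces consequences of the trace identity $\Delta f=n(R-\rho)$, so the argument degenerates at that level; it is the extra factor $\tfrac12$ supplied by the second Bianchi identity after the second divergence that breaks the degeneracy and produces the coefficient $1-\tfrac{n}{2}$ — which vanishes precisely in dimension two, consistent with the hypothesis $n\ge 3$ and with the genuinely different two-dimensional situation. Apart from that, the only care required is to keep the curvature sign conventions consistent in the two identities invoked above.
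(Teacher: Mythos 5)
Your proof is correct and follows essentially the same route as the paper: both arguments hinge on the identity $(n-1)\Delta R+\tfrac12\langle\nabla R,\nabla f\rangle+R(R-\rho)=0$ together with $\Delta f=n(R-\rho)$, and both conclude by integrating over $M$ to get $\bigl(1-\tfrac{n}{2}\bigr)\int_M(R-\rho)^2\,dV=0$. The only difference is that you derive that identity from the commutation and contracted Bianchi identities, whereas the paper quotes it from Daskalopoulos--Sesum; your derivation is accurate.
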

\begin{proof}
As observe in P.20 of \cite{DS} (cf. \cite{C}) by \eqref{soliton-defn} and a 
direct computation one has
\begin{equation}\label{R-eqn}
(n-1)\Delta_gR+\frac{1}{2}<\nabla R,\nabla f>_g+R(R-\rho)=0.
\end{equation}
Tracing \eqref{soliton-defn} over $i,j$,
\begin{align}
&n(R-\rho)=\Delta f\label{f-laplacian-eqn}\\
\Rightarrow\quad&\int_M(R-\rho)\,dV=\frac{1}{n}\int_M\Delta f\,dV=0.
\label{R-rho-integral}
\end{align}
Integrating \eqref{R-eqn} over $M$ by \eqref{f-laplacian-eqn} we get,
\begin{align}\label{R-rho-integral2}
\int_MR(R-\rho)\,dV=&-\frac{1}{2}\int_M<\nabla R,\nabla f>_g\,dV\notag\\
=&\frac{1}{2}\int_MR\Delta f\,dV\notag\\
=&\frac{n}{2}\int_MR(R-\rho)\,dV.
\end{align}
Since $n\ge 3$, by \eqref{R-rho-integral2},
\begin{equation}\label{R-rho-integral3}
\int_MR(R-\rho)\,dV=0.
\end{equation}
By \eqref{R-rho-integral} and \eqref{R-rho-integral3},
\begin{equation*}
\int_M(R-\rho)^2\,dV=0.
\end{equation*}
Hence $R\equiv\rho$ on $M$ and the theorem follows.
\end{proof}


\begin{thebibliography}{99}

\bibitem[B1]{B1} S.~Brendle, {\em Convergence of the Yamabe flow in dimension 
6 and higher}, Invent. Math. 170 (2007), no. 3, 541--576.

\bibitem[B2]{B2} S.~Brendle, {\em Convergence of the Yamabe flow for arbitrary
initial energy}, J. Differential Geom. 69 (2005), no. 2, 217--278.

\bibitem[BMS]{BMS} A.~Burchard, R.J.~Mccan and A.~Smith, {\em Explicit
Yamabe flow of an asymmetric cigar}, Methods Appl. Analy. 15 (2008), 
no. 1, 65--80.

\bibitem[C]{C} B.~Chow, {\em The Yamabe flow on locally conformally flat 
manifolds with positive Ricci curvature}, Comm. Pure Appl. Math. 45 (1992),
1003--1014.

\bibitem[DS]{DS} P.~Daskalopoulos and N.~Sesum, {\em The classification of
locally conformally flat Yamabe solitons}, http://arxiv.org/abs/1104.2242.

\bibitem[MC]{MC} L.~Ma and L.~Cheng, {\em Properties of complete non-compact
Yamabe solitons}, Ann. Glob. Anal. Geom., DOI 10.1007/s10455-011-9263-3. 

\bibitem[PS]{PS} M.~Del Pino, M.~S\'aez, {\em On the extinction profile for 
solutions of $u_t=\Delta u^{\frac{N-2}{N+2}}$}, Indiana Univ. Math. J. 50
(2001), no. 1, 611-628

\bibitem[H]{H} S.Y.~Hsu, {\em Singular limit and exact decay rate of a 
nonlinear elliptic equation}, http://arxiv.org/abs/1107.2735v1.

\end{thebibliography}
\end{document}